\newtheorem{theorem}{Theorem}[section]
\newtheorem{lemma}[theorem]{Lemma}
\newtheorem{proposition}[theorem]{Proposition}
\newtheorem{corollary}[theorem]{Corollary}
\newtheorem{conjecture}[theorem]{Conjecture}
\theoremstyle{definition}
\newtheorem{remark}[theorem]{Remark}
\newtheorem{example}[theorem]{Example}
\newcommand{\op}[1]{\operatorname{#1}}
\newcommand{\dbcoh}[1]{\operatorname{D}^{\operatorname{b}}(\operatorname{coh }#1)}
\newcommand{\dabs}{\mathrm{D}}
\newcommand{\gm}{\mathbb{G}_m}
\newcommand{\Hom}{\operatorname{Hom}}
\def\Z{\op{\mathbb{Z}}}
\def\C{\op{\mathbb{C}}}
\def\S{\op{\mathbb{S}}}
\def\Q{\op{\mathbb{Q}}}
\def\O{\op{\mathcal{O}}}
\def\A{\op{\mathbb{A}}}
\def\P{\op{\mathbb{P}}}
\def\Ext{\operatorname{Ext}}
\def\spec{\operatorname{Spec}}
\title[On a Conjecture of Lekili and Ueda]{A maximally-graded invertible cubic threefold that does not admit a full exceptional collection of line bundles}
\author[Favero]{David Favero}
\address{
	\begin{tabular}{l}
		David Favero \\
		\hspace{.1in} University of Alberta, Department of Mathematical and Statistical Sciences \\
		\hspace{.1in} Central Academic Building 632, Edmonton, AB, Canada T6G 2C7 \\
		\hspace{.1in} Korea Institute for Advanced Study \\
		\hspace{.1in} 85 Hoegiro, Dongdaemun-gu, Seoul, Republic of Korea 02455 \\
		\hspace{.1in} Email: {\bf favero@ualberta.ca} \\
	\end{tabular}
}
\author[Kaplan]{Daniel Kaplan}
\address{
  \begin{tabular}{l}
   Daniel Kaplan \\
   \hspace{.1in} University of Birmingham, School of Mathematics \\
   \hspace{.1in} Edgbaston, Birmingham B15 2TT,  United Kingdom \\
   \hspace{.1in} Email: {\bf d.kaplan@bham.ac.uk} \\
  \end{tabular}
}
\author[Kelly]{Tyler L. Kelly}
\address{
  \begin{tabular}{l}
   Tyler L. Kelly \\
   \hspace{.1in} University of Birmingham, School of Mathematics \\
   \hspace{.1in} Edgbaston, Birmingham B15 2TT,  United Kingdom \\
   \hspace{.1in} Email: {\bf t.kelly.1@bham.ac.uk} \\
  \end{tabular}
}
\begin{document}

\begin{abstract}

We show that there exists a cubic threefold defined by an invertible polynomial that, when quotiented by the maximal diagonal symmetry group, has a derived category which does not have a full exceptional collection consisting of line bundles. This provides a counterexample to a conjecture of Lekili and Ueda.
\end{abstract}

\maketitle

\section{Introduction}

Let $\C$ be the complex numbers. We say a polynomial $w \in \C[x_1, \ldots, x_n]$ is invertible if it is of the form 
$$
w = \sum_{i=1}^n \prod_{j=1}^n x_j^{a_{ij}}
$$
where $A = (a_{ij})_{i,j=1}^n$ is a non-negative integer-valued matrix satisfying the following conditions:

\begin{enumerate}
\item the matrix $A$ is invertible over $\Q$;
\item the polynomial $w$ is quasihomogeneous, i.e., there exists \emph{positive} integers $q_j$ such that $d:= \sum_{j=1}^n q_j a_{ij}$ is constant for all $i$; and 
\item the polynomial $w$ is quasi-smooth, i.e., the map $w:\C^n\to\C$ has a unique critical point at the origin.
\end{enumerate}

Let $\gm$ be the multiplicative torus. Consider the following group
\begin{equation}\label{def:gammaw}
\Gamma_w := \{(t_1, \ldots, t_{n+1}) \in \gm^{n+1} \ | \ w(t_1x_1, \ldots, t_nx_n) = t_{n+1}w(x_1, \ldots, x_n) \}.
\end{equation}
This group $\Gamma_w$ acts on $\mathbb{A}^n$ by projecting onto its first $n$ coordinates and then acting diagonally. 
Lekili and Ueda made the following conjecture concerning the bounded derived category associated to the polynomial $w$ and the group $\Gamma_w$.

\begin{conjecture}[Conjecture 1.3 of \cite{LU18}]\label{conjLU}
For any invertible polynomial $w$, the bounded derived category $\dbcoh{X_w}$ of coherent sheaves on the stack
$$
X_w := [(\spec(\C[x_1, \ldots, x_n]/(w))\setminus 0 / \Gamma_w]
$$
has a tilting object, which is a direct sum of line bundles.
\end{conjecture}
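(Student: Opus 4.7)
The plan is to reduce the conjecture to atomic invertible polynomials and handle each atomic case by constructing an explicit tilting bundle from line bundles. By the Kreuzer--Skarke classification, every invertible polynomial $w$ is a Thom--Sebastiani sum $w = w_1 \boxplus \cdots \boxplus w_r$ of polynomials in disjoint sets of variables, where each $w_i$ is of Fermat type $x^{a}$, chain type $x_1^{a_1}x_2 + x_2^{a_2}x_3 + \cdots + x_n^{a_n}$, or loop type $x_1^{a_1}x_2 + x_2^{a_2}x_3 + \cdots + x_n^{a_n}x_1$. First, I would pass from $\dbcoh{X_w}$ to a more computable model: by an Orlov-type Landau--Ginzburg/Calabi--Yau correspondence applied to the affine hypersurface $\{w=0\}$ with its $\Gamma_w$-action, one obtains a semi-orthogonal comparison between $\dbcoh{X_w}$ and a category of $\Gamma_w$-equivariant matrix factorizations $\op{MF}_{\Gamma_w}(\A^n, w)$, up to an explicit collection of line bundles coming from the Gorenstein parameter of $\Gamma_w \curvearrowright \A^n$. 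Since tilting objects in $\op{MF}_{\Gamma_w}(\A^n, w)$ that consist of equivariant line bundles on $\A^n$ descend to line bundles on $X_w$, it suffices to build such a tilting object at the level of matrix factorizations.

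Second, I would construct tilting objects for each atomic type. In the Fermat case the category of $\Gamma_w$-equivariant matrix factorizations of $x^a$ is essentially the derived category of a single point with a cyclic grading, for which $\O, \O(1), \ldots, \O(a-1)$ is manifestly a tilting bundle of line bundles. In the chain case one should argue by induction on $n$: the chain relation $x_i^{a_i}x_{i+1}$ allows a Kn\"orrer-type reduction that strips off the last variable while replacing the graded ring by one of the same shape but smaller length, and the resulting recursion produces a tilting set of explicit equivariant characters. In the loop case a similar, though more delicate, recursion should be attempted using the cyclic structure, keeping track of the extra monodromy introduced by closing up the chain into a loop. Finally, I would attempt a Thom--Sebastiani assembly: if $T_i$ is a tilting object of $\Gamma_{w_i}$-equivariant line bundles for each atomic piece, then the external tensor product $T_1 \boxtimes \cdots \boxtimes T_r$ should give a tilting object on the product, and I would pull back/descend along the inclusion $\Gamma_w \hookrightarrow \prod_i \Gamma_{w_i}$ using induction along this finite-index subgroup to obtain a tilting set of $\Gamma_w$-equivariant line bundles.

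The main obstacle is precisely the descent step in the Thom--Sebastiani assembly when the loop type is involved. For atomic loops, the character lattice of $\Gamma_{w_i}$ carries torsion that couples the variables cyclically, and the subgroup $\Gamma_w \subset \prod_i \Gamma_{w_i}$ cut out by matching the degree characters need not split off a direct summand. As a result, the obvious $\Gamma_w$-equivariant line bundles obtained by restricting $T_1 \boxtimes \cdots \boxtimes T_r$ may fail to generate $\dbcoh{X_w}$, or may acquire non-trivial higher self-extensions once the projection $\prod_i \Gamma_{w_i} \twoheadrightarrow \Gamma_w$ identifies characters. To push the proof through, one would have to show that for every combination of atomic pieces the resulting torsion in $\Gamma_w$ can be absorbed by enlarging the candidate tilting set to include enough equivariant twists to still generate and still have no higher $\Ext$'s. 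Verifying this last point uniformly in the presence of loop factors is the critical difficulty; if even one invertible cubic threefold of mixed loop/chain type resists this assembly, the whole strategy collapses.
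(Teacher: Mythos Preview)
The proposal cannot succeed because the statement is false. The very paper you are working in \emph{disproves} Conjecture~\ref{conjLU} by exhibiting an explicit counterexample, namely the single atomic loop polynomial
\[
w = x_1^2x_2 + x_2^2 x_3 + x_3^2x_4 + x_4^2x_5 + x_5^2 x_1.
\]
For this $w$ the paper computes that $\op{Pic}(X_w)\cong \Z\times\Z/11\Z$, that $\dim\op{HH}_*(\dbcoh{X_w})=54$ via Chen--Ruan cohomology (so any full exceptional collection must have exactly $54$ objects), and then, by a direct analysis of Homs and Serre duality among the line bundles $\O(a,b)$, that any exceptional collection consisting of line bundles has at most $24$ objects. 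Hence there can be no full exceptional collection of line bundles on $X_w$, and a fortiori no tilting object that is a direct sum of line bundles.

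Your own closing paragraph almost puts a finger on the failure, but it mislocates it. The obstruction is not only in the Thom--Sebastiani descent step for mixtures of atomic types: already a \emph{single} loop factor in five variables breaks the program. In your atomic loop step you asserted that a Kn\"orrer-type recursion ``should'' produce a tilting set of equivariant characters; this is exactly where the argument is not merely incomplete but wrong. For the loop above, the torsion in $\widehat{\Gamma_w}$ cannot be absorbed by any enlargement of the candidate set of line bundles: the numerical obstruction $24<54$ is absolute. So the gap is not a missing technical lemma but a genuinely false claim in the loop case.
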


In this paper, we show that
\begin{equation}\label{our w}
w = x_1^2x_2 + x_2^2 x_3 + x_3^2x_4+x_4^2x_5 +x_5^2 x_1
\end{equation}
provides a counterexample to this conjecture.  In fact, the maximal length of any exceptional collection of line bundles on $\dbcoh{X_w}$ is 24.  On the other hand, we calculate that 54 line bundles would be required in any full exceptional collection, let alone tilting object. 

\subsection{Relation to current literature and mirror symmetry}

The result above is analogous to the case of toric varieties.  It was asked by King if the derived category of a smooth projective toric variety admits a tilting object which is a direct sum of line bundles.  This later became known as King's conjecture.  The first counterexamples to King's conjecture were provided by Hille-Perling \cite{HP06} then later by Efimov \cite{Efimov} in the Fano case.  Nevertheless, in \cite{Kaw1} Kawamata proved that the derived category of any smooth  projective toric Deligne-Mumford stack has a full exceptional collection.  It just need not consist of line bundles or sheaves for that matter (see  \cite[Remark 7]{Kaw2}).

The Landau-Ginzburg B-model analogue to $\dbcoh{X_w}$ given by the singularity category of $(\C^n, \Gamma_w, w)$ is well-studied in the context of homological mirror symmetry. At present, it is known to have a full exceptional collection \cite{FKK20}. It is also known to have a full strong exceptional collection in certain cases, e.g., when $n\leq 3$ \cite{Kra19} or when $w$ can be written as the Thom-Sebastiani sum of Fermat and loop polynomials \cite{HO18}. This has been desirable in order to establish Homological Mirror Symmetry for mirror pairs of (gauged) Landau-Ginzburg models \cite{Tak09, KST1, KST2, FU09, FU11, LU18, HS19}.

\subsection{Plan of Paper}

In Section~\ref{sec:line bundles}, we show that Picard group of the stack $X_w$ is isomorphic to $\Z \times \Z\!/11 \!\Z$.  In Section~\ref{sec:CRCohom}, we calculate that the Chen-Ruan cohomology of $X_w$ is 54-dimensional.  This implies that the cardinality of any full exceptional collection for $\dbcoh{X_w}$ must be 54 (Corollary~\ref{cor: HH_*}). On the other hand, in Section~\ref{sec:Homs} we find a sharp upper bound of 24 on the cardinality of an exceptional collection for  $\dbcoh{X_w}$ consisting of line bundles.

\subsection{Acknowledgments}

We cordially thank Yank\i{} Lekili, Atsushi Takahashi, Yuki Hirano, and Kazushi Ueda for their communications regarding the topic of this paper. We are especially grateful to Kazushi Ueda for spotting an issue with our initial attempt at finding a counterexample. 

This project was initiated while the three authors visited the Fields Institute during the Thematic Program on the Homological Algebra of Mirror Symmetry. The first-named author was supported by the Natural Sciences and Engineering Research Council of Canada through the Canada Research Chair and Discovery Grant programs.
The second-named author was supported by the Engineering and Physical Sciences Research Council (EPSRC) under Grant EP/S03062X/1. 
The third-named author was also supported by the EPSRC under Grant EP/N004922/2 and EP/S03062X/1.

\section{Line bundles on $X_w$}\label{sec:line bundles}

In order to address Conjecture~\ref{conjLU}, we first  require an explicit description of the Picard group of $X_w$. 

\subsection{The group $\Gamma_w$} 
First, we define the group of diagonal automorphisms of the invertible polynomial $w$ to be
\begin{equation}
G_w :=  \{(t_1, \ldots, t_n) \in \gm^{n} \ | \ w(t_1x_1, \ldots, t_nx_n) = w(x_1, \ldots, x_n) \}.
\end{equation}
This sits in an exact sequence
\begin{equation}
0 \longrightarrow G_w \longrightarrow \Gamma_w \stackrel{\chi_{n+1}}{\longrightarrow} \gm \rightarrow 0
\end{equation}
where $\chi_{n+1}$ is the projection onto the $(n+1)^{\text{th}}$ term of $\Gamma_w$. Indeed, we know that $\chi_{n+1}$ is surjective as, given $\lambda \in \gm$, we have that $(\lambda^{q_1/d}, \ldots, \lambda^{q_n/d}, \lambda) \in \Gamma_w$. 

By Lemma 1.6(B) of \cite{Kra09} for a loop polynomial
$$
w = x_1^{a_1}x_2 + x_2^{a_2} x_3 + \ldots + x_{n-1}^{a_{n-1}} x_n + x_n^{a_n} x_1,
$$
we have $G_w \cong \Z \! / (a_1 \cdots a_n + (-1)^{n+1}) \! \Z$ with generator $(e^{2\pi i \varphi_1}, \ldots , e^{2\pi i \varphi_n})$ where 
\begin{equation}\label{Gw gen}
 \varphi_j := \frac{(-1)^{n+1-j} a_1\cdots a_{j-1}}{a_1 \cdots a_n + (-1)^{n+1}}.
\end{equation}

Recall that $w$ is quasi-homogeneous i.e. we can choose $q_i$ such that $d:= \sum_{j=1}^n q_j a_{ij}$ is constant for all $i$ and such that $\gcd(q_1, \ldots, q_n) = 1$.  This yields a subgroup $J_w \cong \gm$  defined by
\[
f: J_w \rightarrow \Gamma_w; \quad f(\lambda) = (\lambda^{q_{1}}, \ldots, \lambda^{q_n}, \lambda^d)
\]
known as the exponential grading operator in the literature.

Furthermore, the inclusion $f$ gives rise to a split short exact sequence 
\begin{equation}\label{split}
0 \longrightarrow J_w \longrightarrow \Gamma_w \longrightarrow \overline{G_w} \longrightarrow 0
\end{equation}
where $\overline{G_w} := G_w / (J_w \cap G_w)$ is the quotient group. Since $\gcd(q_1,\ldots, q_n) = 1$, there exists $b_i$ with $\sum_{i=1}^n b_iq_i = 1$, which gives rise to the splitting of the exact sequence given by
$$
g:\Gamma_w \to J_w;\quad  g(\lambda_1, \ldots, \lambda_n, \lambda_{n+1}) = \prod_{i=1}^n \lambda_i^{b_i}.
$$
Hence $\Gamma_w \cong J_w \times \overline{G_w}$.

The isomorphism $\Gamma_w \cong J_w \times \overline{G_w}$, gives rise to an intermediate quotient stack associated to $J_w$,
$$
Z_w = [(\spec(\C[x_1, \ldots, x_n]/(w))\setminus 0 / J_w],
$$
which is a hypersurface in the weighted projective stack
$$
 [(\spec(\C[x_1, \ldots, x_n])\setminus 0 / J_w] = \P(q_1: \cdots : q_n).
$$
This allows us to identify $X_w$ with the quotient $[Z_w/\overline{ G_w}]$.

\begin{example} \label{exam: G_w calculation}
Let $w = x_1^2x_2 + x_2^2 x_3 + x_3^2x_4+x_4^2x_5 +x_5^2 x_1$ as in~\eqref{our w}.  Then
$G_w = \Z \! / 33 \! \Z$ with generator
\begin{equation}\label{g}
g = (\zeta, \zeta^{-2}, \zeta^{4}, \zeta^{-8}, \zeta^{16})
\end{equation}
where $\zeta$ is a primitive 33rd root of unity. Here, the intersection $J_w\cap G_w$ is generated by $g^{11} = (\zeta^{11}, \zeta^{11}, \zeta^{11}, \zeta^{11}, \zeta^{11})$. 
Hence $\overline{G_w}$ can be identified with the symmetry group generated by $(\xi, \xi^{9}, \xi^{4}, \xi^{3}, \xi^{5})$ where $\xi$ is a primitive 11th root of unity.
\end{example}

\subsection{The Picard group of $X_w$}

The Grothendieck\textendash Lefschetz theorem allows us to calculate the Picard group of $X_w$ as follows.

\begin{proposition}\label{prop:Picard}
Let $w$ be an invertible polynomial with  $n\geq 5$ and $q_1 =\ldots = q_n = 1$. The Picard group of $X_w$ is  isomorphic to $\Z\times \widehat{\overline{G_w}}$, where $\widehat{\overline{G_w}}$ is the group of characters of $\overline{G_w}$. 
\end{proposition}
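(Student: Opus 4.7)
The plan is to exploit the splitting $\Gamma_w \cong J_w \times \overline{G_w}$ of~\eqref{split} to realize $X_w$ as the global quotient $[Z_w/\overline{G_w}]$ of a smooth projective hypersurface by a finite group, then combine a Grothendieck--Lefschetz computation of $\op{Pic}(Z_w)$ with the standard comparison of equivariant and invariant Picard groups.

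First, because every $q_i = 1$, the ambient weighted projective stack $\P(q_1{:}\cdots{:}q_n)$ is just $\P^{n-1}$ and $Z_w$ is the honest hypersurface $\{w=0\} \subset \P^{n-1}$. Quasi-smoothness of $w$ translates directly into smoothness of $Z_w$, and $\dim Z_w = n-2 \geq 3$. Applying the Grothendieck--Lefschetz theorem for Picard groups to $Z_w \subset \P^{n-1}$ yields $\op{Pic}(Z_w) \cong \op{Pic}(\P^{n-1}) \cong \Z$, generated by $\mathcal{O}_{Z_w}(1)$.

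Next, I would identify $\op{Pic}(X_w)$ with the $\overline{G_w}$-equivariant Picard group $\op{Pic}^{\overline{G_w}}(Z_w)$ and invoke the standard five-term exact sequence (from the Leray spectral sequence for the $\overline{G_w}$-cover $Z_w \to X_w$):
$$0 \to H^1(\overline{G_w}, \Gamma(Z_w,\mathcal{O}^*)) \to \op{Pic}(X_w) \to \op{Pic}(Z_w)^{\overline{G_w}} \to H^2(\overline{G_w}, \Gamma(Z_w,\mathcal{O}^*)).$$
Since $Z_w$ is projective and connected, $\Gamma(Z_w,\mathcal{O}^*) = \C^*$, so the first term is $\widehat{\overline{G_w}}$.

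Finally, it remains to check (a) that $\overline{G_w}$ acts trivially on $\op{Pic}(Z_w) \cong \Z$ and (b) that $\mathcal{O}_{Z_w}(1)$ lies in the image of $\op{Pic}(X_w) \to \op{Pic}(Z_w)^{\overline{G_w}}$, i.e., lifts to a $\overline{G_w}$-equivariant line bundle. Both are immediate from the observation that $\overline{G_w}$ acts on $\P^{n-1}$ through diagonal matrices: $\mathcal{O}_{\P^{n-1}}(1)$ carries a tautological $\gm^n$-equivariant structure that restricts to a $\overline{G_w}$-equivariant structure on $\mathcal{O}_{Z_w}(1)$, which also shows $\overline{G_w}$ preserves the ample generator. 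This produces a splitting of the exact sequence and gives $\op{Pic}(X_w) \cong \Z \times \widehat{\overline{G_w}}$. The main substantive ingredient is the Grothendieck--Lefschetz input, which is precisely where the hypothesis $n \geq 5$ is used; the equivariant accounting in the second half is routine once that splitting is in place.
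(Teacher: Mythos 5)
Your proposal is correct and follows essentially the same route as the paper: identify $\op{Pic}(X_w)$ with the $\overline{G_w}$-equivariant Picard group of $Z_w$, apply Grothendieck--Lefschetz to get $\op{Pic}(Z_w)\cong\Z$, and observe that the equivariant structures on a given line bundle form a torsor under $\widehat{\overline{G_w}}$, yielding a split extension $0\to\widehat{\overline{G_w}}\to\op{Pic}(X_w)\to\Z\to 0$. Your phrasing via the five-term exact sequence and the explicit tautological equivariant structure on $\O(1)$ is just a slightly more detailed packaging of the same argument (the paper splits the sequence by projectivity of $\Z$ instead).
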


\begin{proof}
Since $X_w = [Z_w / \overline{G_w}]$ is a global quotient stack, $\op{Pic}(X_w)$ is nothing more than the $\overline{G_w}$-equivariant Picard group of $Z_w$. Note that there is a (surjective) pullback map 
$$
\op{Pic}(X_w) \stackrel{f}{\rightarrow} \op{Pic}(Z_w)
$$
which just forgets the equivariant structure. By the Grothendieck\textendash Lefschetz Theorem (see e.g. \cite[Corollary 3.2]{Hart70}),   $\op{Pic}(Z_w) \cong \Z$ i.e.\ any line bundle is of the form $\O(n)$.  As $\O(n)$ admits an equivariant structure, the forgetful map $f$ is surjective.

Furthermore, as any two equivariant structures differ by a character of $\overline{G_w}$, we get a short exact sequence
$$
0 \longrightarrow \widehat{\overline{G_w}} \longrightarrow \op{Pic}(X_w) \stackrel{f}{\longrightarrow} \Z \longrightarrow 0.
$$
Since $\Z$ is a projective $\Z$-module, this splits to give the desired isomorphism.
\end{proof}

\begin{example} \label{exam: picard group}
Let $w = x_1^2x_2 + x_2^2 x_3 + x_3^2x_4+x_4^2x_5 +x_5^2 x_1$ so that $\overline{G_w} = \Z \! / 11 \! \Z$. Then by Proposition~\ref{prop:Picard}, we have $\op{Pic}(X_w) \cong \Z\times (\Z \!/11 \! \Z)$.
\end{example}

\section{Dimension of the Hochschild homology of $\dbcoh{X_w}$}\label{sec:CRCohom}
In this section, we compute the dimension of the Chen\textendash Ruan cohomology  of $X_w$ to be 54. This implies that any full exceptional collection for $\dbcoh{X_w}$ must have 54 objects.  

\begin{proposition} \label{prop: HH*}
Let $w = x_1^2x_2 + x_2^2 x_3 + x_3^2x_4+x_4^2x_5 +x_5^2 x_1$. Then $\dim(H^*_{CR}(X_w; \C)) = 54$.  
\end{proposition}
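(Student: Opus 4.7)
The plan is to use the quotient description $X_w = [Z_w/\overline{G_w}]$ with $\overline{G_w} \cong \Z/11\Z$ from Example~\ref{exam: G_w calculation}. For a global quotient orbifold by a finite abelian group, Chen--Ruan cohomology decomposes (as a graded vector space; the age shifts do not affect dimensions) into twisted sectors indexed by group elements:
$$
H^*_{CR}(X_w; \C) \cong \bigoplus_{h \in \overline{G_w}} H^*(Z_w^h; \C)^{\overline{G_w}}.
$$
It therefore suffices to compute each sector's contribution.

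For the ten nontrivial sectors, write $h = g^a$ where $g = (\xi, \xi^9, \xi^4, \xi^3, \xi^5)$ is the generator from Example~\ref{exam: G_w calculation}. The eigenvalues of $h$ on $\C^5$ are $\xi^{a c_i}$ for $(c_1,\ldots,c_5) = (1,9,4,3,5)$; since these are pairwise distinct modulo the prime $11$ whenever $a \not\equiv 0$, the $h$-fixed locus in $\P^4$ is exactly the five coordinate points. Each coordinate point lies on $Z_w$, since every monomial of $w$ involves two distinct variables, and each is $\overline{G_w}$-invariant because the action is diagonal. Hence $H^*(Z_w^h; \C)^{\overline{G_w}} = \C^5$ for every nontrivial $h$, contributing $10 \cdot 5 = 50$.

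For the untwisted sector, $Z_w$ is a smooth cubic threefold with Betti numbers $(1,0,1,10,1,0,1)$. The even-degree classes are restrictions of powers of the hyperplane class from $\P^4$ and so are $\overline{G_w}$-invariant (the group acts linearly on $\P^4$), contributing $4$ to the invariants. For $H^3(Z_w)$, I would invoke the Griffiths residue description $H^{2,1} \cong R_1$ and $H^{1,2} \cong R_4$, where $R = \C[x_1,\ldots,x_5]/(\partial_1 w,\ldots,\partial_5 w)$ is the Jacobian ring, with Hilbert series $(1+t)^5$. These isomorphisms are $\overline{G_w}$-equivariant up to the character of the meromorphic form $\Omega$ on $\P^4$, which equals $\sum_i c_i = 22 \equiv 0 \pmod{11}$ and is thus trivial. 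Now $R_1 = \op{span}(x_1,\ldots,x_5)$ has characters $c_i \in \{1,9,4,3,5\}$, none trivial mod $11$, so $R_1^{\overline{G_w}} = 0$. For $R_4$, since $R$ is graded Gorenstein with one-dimensional socle $R_5$, Poincar\'e duality yields $R_4 \cong R_1^* \otimes R_5$ as $\overline{G_w}$-representations. The socle is generated by the Hessian $\det(\partial_i \partial_j w)$, each of whose determinant terms transforms by $-2\sum_i c_i \equiv 0 \pmod{11}$, so $R_5$ is trivial and $R_4 \cong R_1^*$ also has no invariants. The identity sector thus contributes $4$, giving $4 + 50 = 54$.

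The main technical step is establishing the $\overline{G_w}$-equivariance (with trivial twist) of the Griffiths isomorphism and the character of the socle; both reduce to the Calabi--Yau-type identity $\sum_i c_i \equiv 0 \pmod{11}$. Everything else is elementary modular arithmetic with the exponent vector $(c_1,\ldots,c_5) = (1,9,4,3,5)$ and standard facts about smooth cubic threefolds.
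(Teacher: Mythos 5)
Your proof is correct and follows essentially the same route as the paper: a twisted-sector decomposition contributing $50$ from the non-identity sectors (isolated fixed points on $Z_w$) and $4$ from the untwisted sector (invariant even cohomology plus a Griffiths-residue argument killing the invariants of $H^3$). The only differences are cosmetic --- you index sectors by the finite group $\overline{G_w}$ acting on $Z_w$ rather than by $\Gamma_w$ acting on the punctured affine cone, and you justify the vanishing of $(H^{1,2})^{\overline{G_w}}$ via Gorenstein duality of the Jacobian ring and the Hessian socle generator where the paper simply says ``analogously''; your supporting computations (pairwise distinctness of the weights $ac_i$ modulo the prime $11$, and $\sum_i c_i = 22 \equiv 0 \pmod{11}$) all check out.
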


\begin{proof}
As vector spaces, the (ungraded) Chen\textendash Ruan cohomology of $X_w$ is the direct sum of ordinary cohomology groups of twisted sectors
$$
H^*_{CR}(X_w; \C) = \bigoplus_{\gamma \in \Gamma_w} H^*(\{w=0\}_{\gamma} /  \Gamma_w ; \C)
$$
where $\{w=0\}_{\gamma} =\{ x \in  \{w=0\}_{\C^5 \setminus \{0\}} \ | \ \gamma\cdot x = x\}$  \cite[Section 3]{CR11}. 

First, note that, if $\gamma = (\lambda_1, \ldots, \lambda_5)$ so that $\lambda_i \neq 1$ for all $i$, then $\gamma\cdot x \neq x$ for all $x \in \C^5 \setminus \{0\}$. This implies that the twisted sector corresponding to $\gamma$ contributes the cohomology of the empty set, i.e., nothing.

First, we address the twisted sector associated to the identity element $\gamma = e$.
Note that $H^*(\{w=0\} /  \Gamma_w ; \C) = H^*(Z_w ; \C)^{\bar G_w}$, so we must see how $\overline{G_w}$ acts on the cohomology of $Z_w$. Recall that the Hodge diamond of the cubic $Z_{w}$ is of the form
\[
\begin{array}{ccccccc}
 	&  	&   	&  1 	&   	&   	& \\
 	& 	& 0 	&    	& 0 	&	& \\
	& 0 	&	& 1	&	& 0	& \\
    0 	& 	& 5	&	& 5	& 	& 0 \\
	& 0 	&	& 1	&	& 0	& \\    
	& 	& 0 	&    	& 0 	&	& \\
 	&  	&   	&  1 	&   	&   	& \\
\end{array}
\]
This is computed using the Griffiths' residue map  \cite{Gri69}, which also allows us to describe the action of $\overline{G_w}$.  Namely any element $H^{2,1}(Z_w)$ can be written as the residue of a $4$-form
$$
\varphi = \frac{Q}{w} \Omega_0, \qquad \Omega_0 = \sum_{i=1}^5 (-1)^i x_i \op{d}x_1\wedge \ldots \wedge \widehat{\op{d}x_i} \wedge \ldots \wedge \op{d}x_5.
$$
where $Q$ is a degree 1 polynomial in $\C[x_1,\ldots, x_5]$. By looking at the action by the generator $\rho$ of $\overline{G_w}$, we can see that $w$ and $\Omega_0$ are invariant under its action; however, no degree $1$ polynomial is, so all of $H^{2,1}(Z_w;\C)$ is not $\overline{G_w}$-invariant. Analogously, the cohomology $H^{1,2}(Z_w;\C)$ is not $\overline{G_w}$-invariant. The hyperplane classes, on the other hand, are all invariant cycles, so 
$$
\dim H^*(\{w=0\} / \Gamma_w ; \C) = 4.
$$

Lastly, there are 50 non-identity elements 
$$
S := \{ (\rho\tau^{-1})^a, (\rho\tau^{-9})^a, (\rho\tau^{-4})^a, (\rho\tau^{-3})^a, (\rho\tau^{-5})^a \ | \ 1 \leq a \leq 10\} \subseteq \Gamma_w
$$
with a fixed point where $\rho:= (\xi, \xi^{9}, \xi^{4}, \xi^{3}, \xi^{5})$ is the generator of $\overline{G_w}$ and $\tau = (\xi, \xi, \xi, \xi, \xi)$.
In fact, each has a single fixed point and hence contributes 1 dimension to the Chen-Ruan cohomology. 

We conclude that $\dim(H^*_{CR}(X_w; \C))= 4 + |S| = 4+50 = 54$.
\end{proof}

This proposition implies the following corollary.

\begin{corollary} \label{cor: HH_*}
For $w$ as defined in~\eqref{our w}, we have that $\dim(\text{HH}_*(\dbcoh{X_w})) = 54$. In particular, any full exceptional collection for $\dbcoh{X_w}$ has precisely 54 objects.
\end{corollary}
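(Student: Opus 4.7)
The plan is to derive the corollary from the previous proposition via two standard inputs: first, an HKR-type isomorphism identifying Hochschild homology of $\dbcoh{X_w}$ with Chen--Ruan cohomology of $X_w$; second, additivity of Hochschild homology under semiorthogonal decompositions. Since the proposition gives $\dim H^*_{CR}(X_w;\C) = 54$, combining these yields both assertions of the corollary.

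\smallskip
\noindent\textbf{Step 1 (Hochschild homology equals Chen--Ruan cohomology).} Because $w$ is quasi-smooth and we have excised the origin, $X_w$ is a smooth proper Deligne--Mumford stack. Since it is smooth and proper, the Hochschild homology of the dg-enhancement of $\dbcoh{X_w}$ agrees with the Hochschild homology of the stack itself. The orbifold Hochschild--Kostant--Rosenberg theorem (proved by Baranovsky and C\u{a}ld\u{a}raru, compare Arinkin--C\u{a}ld\u{a}raru) then yields an isomorphism of $\C$-vector spaces
\[
\text{HH}_*(\dbcoh{X_w}) \;\cong\; \bigoplus_{[\gamma]} H^*\bigl( \{w=0\}_\gamma / C(\gamma);\, \C\bigr) \;=\; H^*_{CR}(X_w;\C),
\]
where $[\gamma]$ runs over conjugacy classes of $\Gamma_w$ (all of which are single elements since $\Gamma_w$ is abelian). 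Applying Proposition~\ref{prop: HH*} then gives $\dim \text{HH}_*(\dbcoh{X_w}) = 54$.

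\smallskip
\noindent\textbf{Step 2 (Counting objects in a full exceptional collection).} Suppose $\dbcoh{X_w}$ admits a full exceptional collection $E_1, \ldots, E_n$. This produces a semiorthogonal decomposition
\[
\dbcoh{X_w} \;=\; \langle \dbcoh{\op{pt}}, \ldots, \dbcoh{\op{pt}} \rangle
\]
with $n$ components, each equivalent to the derived category of a point. By additivity of Hochschild homology under semiorthogonal decompositions (Kuznetsov), we obtain
\[
\text{HH}_*(\dbcoh{X_w}) \;\cong\; \bigoplus_{i=1}^n \text{HH}_*(\C) \;\cong\; \C^n.
\]
Comparing dimensions with Step 1 forces $n = 54$.

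\smallskip
\noindent\textbf{Expected difficulties.} There is no serious obstacle; both inputs are well-established in the literature, and the real content of the corollary was already extracted in Proposition~\ref{prop: HH*}. The only mild subtlety is citing the correct version of the orbifold HKR theorem for smooth proper DM stacks (rather than for schemes), together with verifying that $X_w$ is indeed smooth and proper so that these statements apply.
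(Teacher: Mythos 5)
Your proof is correct and follows essentially the same route as the paper: identify $\text{HH}_*(\dbcoh{X_w})$ with $H^*_{CR}(X_w;\C)$ via an HKR-type result for smooth proper DM stacks (the paper cites an unpublished result of To\"en, reproven in \cite[Proposition 3.16]{HLP16}, rather than Baranovsky or Arinkin--C\u{a}ld\u{a}raru, but the content is the same), then apply additivity of Hochschild homology under semi-orthogonal decomposition to count the pieces of a full exceptional collection.
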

\begin{proof}
By an unpublished result of To\"en (reproven in \cite[Proposition 3.16]{HLP16}),
\[
\dim(\text{HH}_*(\dbcoh{X_w})) = \dim(H^*_{CR}(X_w; \C)) = 54.
\]
The fact that any full exceptional collection must have 54 objects follows from additivity of Hochschild homology under semi-orthogonal decomposition.
\end{proof}

\begin{remark}
In \cite[Theorem 1.1]{FKK20}, the authors prove that there is a strong exceptional collection for the singularity category $\dabs[\A^5, \Gamma_w, w]$. It is of length 32, the Milnor number of its mirror LG-model. By the equivariant version of Orlov's theorem (proven by Hirano \cite[Theorem 1.3]{Hir17}), it follows that $\dbcoh{X_w}$ has a full exceptional collection of length $32 + 2(11) = 54$.  From this it also follows that any full exceptional collection must have 54 objects.
\end{remark}

\section{Computations of $\Ext$ between line bundles on $X_w$}\label{sec:Homs}
By Corollary~\ref{cor: HH_*}, any full exceptional collection for $\dbcoh{X_w}$ has 54 objects. However, in this section we show that an exceptional collection consisting of line bundles on $X_w$ has at most 24 objects (and remark that this bound is achieved).

\begin{lemma} \label{lem: homs between line bundles}
For $a \geq 0$, $\Hom(\O, \O(a, b)) \neq 0$ unless $a= 0$ and $b \neq 0$ or 
\[
(a, b) \in  \mathbb{X} : = \{ (1, 0), (1, 2), (1, 6), (1, 7), (1, 8), (1, 10), (2, 0) \}.
\] 
\end{lemma}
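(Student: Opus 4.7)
The plan is to translate the Hom-computation into a character-graded calculation in the coordinate ring of the cubic $Z_w$. Writing $X_w = [Z_w / \overline{G_w}]$ and using Example~\ref{exam: G_w calculation}, the generator $\rho$ of $\overline{G_w} \cong \Z/11\Z$ acts on $x_i$ with weight $c_i$, where $(c_1, \ldots, c_5) = (1, 9, 4, 3, 5)$. A line bundle $\O(a, b)$ on $X_w$ is $\O_{Z_w}(a)$ with the equivariant structure twisted by the character $\chi_b$ of $\overline{G_w}$, and standard vanishing on $\P^4$ gives $H^0(Z_w, \O(a)) \cong S_a$ for $a \geq 0$, where $S := \C[x_1, \ldots, x_5]/(w)$. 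I would therefore identify
\[
\Hom_{X_w}(\O, \O(a, b)) \cong (S_a)_b,
\]
the $\chi_b$-weight subspace of $S_a$.

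With this identification, I would handle the degrees $a = 0, 1, 2$ by direct enumeration of weights. For $a = 0$, $S_0 = \C$ is pure weight $0$, matching the exception ``$a = 0$ and $b \neq 0$''; for $a = 1$, $S_1$ has weights $\{1, 3, 4, 5, 9\}$, whose complement in $\Z/11\Z$ is $\{0, 2, 6, 7, 8, 10\}$, yielding the six $a = 1$ exceptions in $\mathbb{X}$; for $a = 2$, I would compute the fifteen pairwise sums of elements of $\{1, 3, 4, 5, 9\}$ modulo $11$ and observe that every residue in $\Z/11\Z$ except $0$ is hit, giving the last exception $(2, 0)$.

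The remaining work is to show $(S_a)_b \neq 0$ for every $a \geq 3$ and $b \in \Z/11\Z$. The crucial input is that $x_1$ is a non-zero-divisor in $S$: if $x_1 f = g w$ in $\C[x_1, \ldots, x_5]$, then setting $x_1 = 0$ yields $g|_{x_1 = 0} \cdot w|_{x_1 = 0} = 0$; since $w|_{x_1 = 0} = x_2^2 x_3 + x_3^2 x_4 + x_4^2 x_5$ is nonzero in the integral domain $\C[x_2, \ldots, x_5]$, we deduce $g \in (x_1)$, and hence $f \in (w)$. Multiplication by $x_1$ thus induces injections $(S_a)_b \hookrightarrow (S_{a+1})_{b+1}$, which reduces the case $a \geq 3$ to the base case $a = 3$ by induction on $a$ (since $b \mapsto b+1$ is a bijection on $\Z/11\Z$). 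For $a = 3$, the five monomials $x_1^2 x_2,\, x_2^2 x_3,\, x_3^2 x_4,\, x_4^2 x_5,\, x_5^2 x_1$ are all of weight $0$ and span a $5$-dimensional subspace of $\C[x]_3$ whose image in $S_3$ is $4$-dimensional (the quotient by $\C \cdot w$), handling $b = 0$; for $b \neq 0$, shifting the ten weights of $S_2$ by multiplication by $x_1$ produces all residues except $1$, which is realized by $x_3^3$ (since $3 c_3 = 12 \equiv 1 \pmod{11}$).

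The main obstacle is the careful bookkeeping of weights modulo $11$ in the small-degree cases; once the non-zero-divisor property of $x_1$ is in hand, the extension to all $a \geq 3$ is immediate.
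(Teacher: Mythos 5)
Your proposal is correct and follows essentially the same route as the paper: both identify $\Hom(\O,\O(a,b))$ with the bidegree-$(a,b)$ piece of $\C[x_1,\dots,x_5]/(w)$, verify the low-degree cases by exhibiting monomials (the paper via an explicit table, you via enumerating the weights $\{1,9,4,3,5\}$ and their sums), and then bootstrap to all $a\geq 3$ by multiplying by powers of $x_1$. Your explicit check that $x_1$ is a non-zero-divisor in $\C[x]/(w)$ supplies a detail the paper leaves implicit, but it is the same argument in substance.
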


\begin{proof}
Observe that $\Hom(\O, \O(a, b))$ is the space of bidegree $(a, b) \in \Z \times \Z \! /11 \! \Z$ polynomials in $\C[x_1,x_2,x_3,x_4,x_5]/(w)$. By Example \ref{exam: G_w calculation}, $\overline G_{w} = \langle  (\xi, \xi^{9}, \xi^{4}, \xi^{3}, \xi^{5})  \rangle \cong \Z \! / 11 \!\Z$ where $\xi$ is a primitive 11th root of unity. Hence,
\[
 \deg(x_1) = (1, 1), \ \  \deg(x_2) = (1, 9), \ \ \deg(x_3) = (1, 4), \ \ \deg(x_4) = (1, 3), \ \ \deg(x_5) = (1, 5).
 \]
 So Table \ref{table: bigraded monomials} exhibits an element in $\Hom(\O, \O(a, b))$ for $1 \leq a \leq 3$, unless $(a, b) \in \mathbb{X}$. We conclude that $\Hom(\O, \O(a, b))$ is non-zero for $a \geq 3$ by multiplying any monomial in $\Hom(\O, \O(3, b-a+3))$ by $x_{1}^{a-3}$. 
\end{proof}

\begin{table}[h]
\caption{The $(a, b)$th entry is an $(a, b)$-bigraded monomial in $\C[x_1,x_2,x_3,x_4,x_5]/(w)$}
\begin{tabular}{c | c | c |  c |  c | c |  c |  c |  c |  c |  c | c }
&  \multicolumn{11}{c}{$\Z \!/ 11 \!\Z$-grading} \\
$\Z$-grading &         0 	&    1 	&    2   	 	&    3    	&    4    		&    5   	 	&    6   	 	&    7    		&    8   	 	&    9    	&    10 \\
\hline \hline
0 & 1 &&&&&&&&&& \\
1 &			&  $x_1$ 	&			& $x_4$ 	& $x_3$ 		& $x_5$ 		& 			&			&			& $x_2$	&	\\
2 &                    &   $x_2x_4$ &$ x_1^2$		& $x_5x_2$ 	&  $x_1 x_4$ 	&  $x_1 x_3$ 	& $x_1 x_5$ 	& $x_3 x_4$ 	& $x_3^2$		&  $x_3 x_5$ & $x_1 x_2$ \\
3 &  $x_1^2 x_2$ & $x_3^3$  	&  $x_1 x_2 x_4$	&   $x_1^3 $	&$x_1 x_2 x_5$&$ x_1^2 x_4$	&  $x_1^2 x_3$	&  $ x_1^2 x_5$ & $x_1 x_3 x_4 $& $x_1x_3^2$ & $x_1 x_3 x_5$\\
\end{tabular}
 \label{table: bigraded monomials}
\end{table}

\begin{lemma} \label{lem: Serre duality}
For $a\geq2$, we have that $\Ext^{3}(\O(a, b), \O) \neq 0$ unless $a=2$ and $b \neq 0$ or 
$$
(a, b)  \in \mathbb{X}' := \{ (3, 0), (3, 2), (3, 6), (3, 7), (3, 8), (3, 10), (4,0) \}.
$$
\end{lemma}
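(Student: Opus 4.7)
The plan is to deduce this lemma from the previous one by Serre duality on the smooth proper Deligne--Mumford stack $X_w$, which is a cubic hypersurface inside the global quotient stack $[\P^4/\overline{G_w}]$. The translation $(a,b) \mapsto (a-2, b)$ carries the exceptional set $\mathbb{X}$ of Lemma~\ref{lem: homs between line bundles} precisely to $\mathbb{X}'$ and the condition ``$a=0$ and $b\ne 0$'' to ``$a=2$ and $b\ne 0$'', so the entire statement will fall out once we identify $\omega_{X_w}$ with $\O(-2,0)$.

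First, I would compute the canonical bundle $\omega_{X_w}$. On $[\P^4/\overline{G_w}]$, the underlying line bundle of the canonical is $\O(-5)$, and its $\overline{G_w}$-equivariant structure is the determinant of the action on $\C^5$. Using the generator $\rho = (\xi, \xi^9, \xi^4, \xi^3, \xi^5)$ of $\overline{G_w}$ exhibited in Example~\ref{exam: G_w calculation}, this determinant is $\xi^{1+9+4+3+5} = \xi^{22} = 1$, since $\xi$ is a primitive 11th root of unity. Hence $\omega_{[\P^4/\overline{G_w}]} = \O(-5, 0)$. The polynomial $w$ is $\overline{G_w}$-invariant of $\Z$-degree 3, so it is a global section of $\O(3,0)$, and adjunction for the hypersurface $X_w \hookrightarrow [\P^4/\overline{G_w}]$ gives $\omega_{X_w} = \O(-5,0) \otimes \O(3,0) = \O(-2,0)$.

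Second, I would apply Serre duality on the smooth proper DM stack $X_w$ of dimension 3, which yields
\[
\Ext^3(\O(a,b), \O) \;\cong\; \Hom(\O, \O(a,b) \otimes \omega_{X_w})^* \;=\; \Hom(\O, \O(a-2, b))^*.
\]
Since $a \geq 2$, we have $a - 2 \geq 0$, so Lemma~\ref{lem: homs between line bundles} applies to the right-hand side. It tells us this group is nonzero unless $a-2 = 0$ with $b \neq 0$, or $(a-2, b) \in \mathbb{X}$. The first case is $a = 2$ with $b \neq 0$; the second case, since $\mathbb{X}' = \mathbb{X} + (2,0)$ entry by entry, is exactly $(a,b) \in \mathbb{X}'$. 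This matches the claimed exceptional set and completes the proof.

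The only step with any content is the identification of $\omega_{X_w}$, and even there the heart of the matter is the arithmetic observation $1 + 9 + 4 + 3 + 5 \equiv 0 \pmod{11}$, which makes the character part of $\omega_{X_w}$ trivial; I do not anticipate any serious obstacle.
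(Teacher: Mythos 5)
Your proposal is correct and follows the same route as the paper: identify $\omega_{X_w}=\O(-2,0)$ by adjunction, apply Serre duality, and reduce to Lemma~\ref{lem: homs between line bundles} via the shift $(a,b)\mapsto(a-2,b)$. The only difference is that you spell out the computation of the equivariant structure on the canonical bundle (the observation $1+9+4+3+5\equiv 0 \pmod{11}$), which the paper leaves implicit.
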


\begin{proof}
By adjunction, the canonical bundle is $\O(-2, 0)$.  
Therefore by Serre duality, 
\begin{align*}
\Ext^{i}(\O(a, b), \O) 	& \overset{\text{Serre}}{\cong} \Ext^{3-i}( \O, \O(a, b) \otimes_{\O} \O(-2, 0))^{*} \\
					& \overset{\textcolor{white}{Serre}}{\cong} \Ext^{3-i}( \O, \O(a-2, b)))^{*}. 
\end{align*}
The result follows from Lemma~\ref{lem: homs between line bundles}.
\end{proof}

\begin{proposition}\label{bound on exceptional line bundles}
An exceptional collection of line bundles in $\dbcoh{X_w}$ has at most 24 objects, and hence cannot be full (by Corollary~\ref{cor: HH_*}). 
\end{proposition}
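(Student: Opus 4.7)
My plan is to exploit the $\Ext$-vanishing constraints between line bundles to bound the length of an exceptional collection combinatorially. The main steps are: identify the vanishing locus $S := \{(c, d) \in \op{Pic}(X_w) : \Ext^*(\O, \O(c, d)) = 0\}$, bound the spread of the $\Z$-grading of the line bundles appearing, and then argue by cases using cycle obstructions in the forced pairwise orderings.

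First, I would argue that $H^1(X_w, \O(c, d)) = H^2(X_w, \O(c, d)) = 0$ for every $(c, d)$. Pulling back to the cubic $Z_w$ and using the exact sequence $0 \to \O_{\mathbb{P}^4}(n-3) \to \O_{\mathbb{P}^4}(n) \to \O_{Z_w}(n) \to 0$ together with the standard cohomology of $\O_{\mathbb{P}^4}(n)$ gives $H^i(Z_w, \O(n)) = 0$ for $i = 1, 2$; passing to $\overline{G_w}$-isotypic components yields the same vanishing on $X_w$. Hence $\Ext^*(\O, \O(c, d)) = 0$ if and only if $\Hom(\O, \O(c, d)) = 0$ (Lemma~\ref{lem: homs between line bundles}) and $\Hom(\O, \O(-c - 2, -d)) = 0$ (by Serre duality with $\omega_{X_w} = \O(-2, 0)$, as in Lemma~\ref{lem: Serre duality}). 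A case analysis on $c$ shows $S$ is nonempty exactly when $c \in \{-4, -3, -2, -1, 0, 1, 2\}$. For an exceptional collection $(\O(a_i, b_i))_{i=1}^N$, the condition $(a_i - a_j, b_i - b_j) \in S$ for $i < j$ implies $a_j - a_i \in \{-2, -1, 0, 1, 2, 3, 4\}$; comparing the positions of $\arg\min a_i$ and $\arg\max a_i$ then yields $\max a_i - \min a_i \leq 4$. Shifting, I assume $a_i \in \{0, 1, 2, 3, 4\}$ and set $U_k := \{b_i : a_i = k\}$.

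The argument splits into two cases. In \emph{Case A} ($U_0, U_4 \neq \emptyset$), since the only $(c, d) \in S$ with $c = -4$ is $(-4, 0)$, every pair in $U_0 \times U_4$ has equal $b$-value, forcing $U_0 = U_4 = \{b\}$ with $|U_0| = |U_4| = 1$. The asymmetry that $(2, 0), (-4, 0) \in S$ while $(-2, 0), (4, 0) \notin S$ forces the orderings $\O(2, b) < \O(0, b)$, $\O(4, b) < \O(2, b)$, and $\O(0, b) < \O(4, b)$, producing a $3$-cycle if $b \in U_2$; hence $b \notin U_2$, so $|U_2| \leq 10$. The gap-$3$ membership in $S$ further constrains $U_1 \subseteq b + \{0, 1, 3, 4, 5, 9\}$ and $U_3 \subseteq b + \{0, 2, 6, 7, 8, 10\}$, so $|U_1|, |U_3| \leq 6$, giving $N \leq 1 + 6 + 10 + 6 + 1 = 24$. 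In \emph{Case B} ($U_0 = \emptyset$ or $U_4 = \emptyset$; after shifting say $U_4 = \emptyset$), the collection occupies only four levels. A $4$-cycle argument on $\{\O(0, b_1), \O(0, b_2), \O(2, b_1), \O(2, b_2)\}$ for distinct $b_1, b_2 \in U_0 \cap U_2$ forces $|U_0 \cap U_2| \leq 1$, so $|U_0| + |U_2| \leq 12$, and analogously $|U_1| + |U_3| \leq 12$, so again $N \leq 24$.

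The main technical obstacle is the cycle analysis: one must identify the pairwise orderings forced by $S$ and verify that they cannot extend to a total order precisely when the asserted bounds on $|U_k|$ are violated. Once these cycle obstructions are in hand, the upper bound follows by straightforward summation.
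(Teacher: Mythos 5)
Your argument is correct and follows essentially the same route as the paper: both rest on Lemmas~\ref{lem: homs between line bundles} and~\ref{lem: Serre duality} (non-vanishing of $\Hom$ and, via Serre duality, of $\Ext^3$) together with cycle obstructions to a compatible ordering, yielding the count $12+12$ across the two parities of the $\Z$-grading. The only organizational difference is that you handle the top level via the refined Case~A constraints ($|U_0|=|U_4|=1$, $|U_2|\le 10$, $|U_1|,|U_3|\le 6$), whereas the paper rules out all of level $4$ except $\O(4,0)$ and then exhibits the explicit loop $\O \to \O(4,0) \to \O(2,0) \to \O$; both are sound.
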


\begin{proof}
By Example \ref{exam: picard group}, any line bundle on $X_{w}$ is of the form $\O(a, b)$ for $(a, b) \in \Z \times \Z \!/11 \!\Z$.
Let $\mathcal{E}$ denote an exceptional collection of line bundles and take the minimal $a$ such that $\O(a, b) \in \mathcal{E}$ for some $b \in \Z \!/ 11 \!\Z$. Since $\mathcal{E} \otimes \O(-a, -b)$ is an exceptional collection, we can assume $(a, b) = (0, 0)$. 

Notice, $\mathcal{E}$ cannot have an object of the form $\O(a, b)$ for $a \geq 5$ as, by Lemma \ref{lem: homs between line bundles}, $\O(a,b)$ receives a non-zero map from $\O$ and, by Lemma \ref{lem: Serre duality}, there is a non-trivial 3-extension of $\O$ by $\O(a, b)$. 

By Table \ref{table: bigraded monomials}, observe that if $b \neq b'$ then for any $a$, one has non-zero elements 
\[
f_1 \in \Hom(\O(a, b), \O(a+2, b')) \hspace{1cm} \text{and} \hspace{1cm} f_2 \in \Hom(\O(a, b'), \O(a+2, b)).
\]
Therefore, denoting by $\S(a, b)$ the Serre functor applied to the identity map on $\O(a, b)$, one has a loop:
\[
\O(a, b) \overset{f_1}{\longrightarrow} \O(a+2, b') \overset{\S(a+2, b')}{\longrightarrow} \O(a, b') \overset{f_2}{\longrightarrow} \O(a+2, b) \overset{\S(a+2, b)}{\longrightarrow} \O(a, b), 
\] 
We conclude that $\mathcal{E}$ cannot have a quadruple of objects 
\[
\{ \O(a, b), \O(a, b'), \O(a+2, b), \O(a+2, b') \}.
\]
 For example, taking $a=0$ (respectively $a=1$) $\mathcal{E}$ cannot have multiple objects with $a=0$ and $a=2$ (respectively $a=1$ and $a=3$).
 This forces there to be at most $12$ lines bundles in $\mathcal E$ with $a=0,2$ and $a=1,3$ respectively.

Now, again by Lemma~\ref{lem: Serre duality}, $\mathcal{E}$ cannot have an object of the form $\O(a, b)$ for $a \geq 4$ except $(a, b) = (4, 0)$.
Hence, we can have at most $1$ more object.  But if $\O(4,0) \in \mathcal{E}$, Lemma~\ref{lem: Serre duality} also forces $\O(0,b) \notin \mathcal E$ for $b\neq 0$.
Hence, if we already have $12$ lines bundles in $\mathcal E$ with $a=0,2$ then $\O(2,b) \in \mathcal E$ for all $b$.  This gives a contradiction as $\O, \O(2,0), \O(4,0)$ also form a loop 
\[
\O  \overset{x_1^2x_3x_5}{\longrightarrow} \O(4, 0)  \overset{\S(4,0)}{\longrightarrow} \O(2, 0)  \overset{\S(2,0)}{\longrightarrow} \O
\]
and therefore cannot be in the same exceptional collection.
We conclude that this 1 additional object cannot take us beyond $24$ exceptional objects.
 \end{proof}

\begin{remark}
The upper bound of 24 exceptional objects is sharp.  It is achieved by the exceptional collection drawn below.  This exceptional collection is not strong, however, we only draw the degree 0 maps for aesthetic simplicity.  The required vanishing can be checked using Lemmas~\ref{lem: homs between line bundles} and~\ref{lem: Serre duality}, and the fact that $\Ext^1, \Ext^2$ vanish for line bundles on a 3-fold hypersurface in projective space (e.g. using the long exact sequence for the divisor).
\[
\resizebox{.95\textwidth}{!}{
\xymatrix{
\O(1, 2)[-3] \ar[rdd] & 							&							& \O(1, 1) 	\ar[rrr] \ar[]!<-3ex,1ex>;[rrrd]!<-3ex,1ex>   \ar[]!<2ex,1ex>;[rrrdddd]!<-3.5ex,1ex>			&&& \O(2, 2) \ar[rdd]		 &				&							& \O(2, 1)[3] \\
\O(1, 6)[-3] \ar[rd]	&							& 							& \O(1, 3)	\ar[rrr] \ar[]!<1ex,1ex>;[rrrd]!<-3ex,1ex>   \ar[]!<2ex,1ex>;[rrrdd]!<-3ex,1ex> 		 		&&& \O(2, 6) \ar[rd]	 	& 				&							& \O(2, 3)[3] \\
\O(1, 7)[-3] \ar[r] & \O(2, 0)[-3] \ar[r] & \O(0,0) \ar[ruu] \ar[ru] \ar[r] \ar[rd] \ar[rdd]				& \O(1, 4) \ar[rrr] \ar[]!<2ex,-1ex>;[rrruu]!<-3ex,0.5ex>  \ar[]!<1ex,0ex>;[rrrd]!<-3ex,0.5ex>			&&& \O(2, 7) \ar[r] & \O(3, 0) \ar[r] & \O(1, 0)[3] \ar[ruu] \ar[ru] \ar[r] \ar[rd] \ar[rdd] 		& \O(2, 4)[3] \\
\O(1, 8)[-3] \ar[ru]	&							& 							& \O(1, 5)	\ar[rrr] \ar[]!<2ex,-1ex>;[rrruu]!<-3ex,0.5ex>  \ar[]!<1ex,0.5ex>;[rrrd]!<-3ex,0.5ex>				&&& \O(2, 8) \ar[ru]		 & 				&							& \O(2, 5)[3] \\
\O(1, 10)[-3] \ar[ruu] & 							& 							& \O(1, 9)	\ar[rrr] \ar[]!<2ex,-1ex>;[rrruuuu]!<-2.5ex,0.5ex> \ar[]!<3ex,0ex>;[rrruu]!<-2.5ex,0.5ex>		&&& \O(2, 10) \ar[ruu]		 & 				&							& \O(2, 9)[3]
}
}
\]
\end{remark}

\end{document}